\documentclass[12pt]{article}

\usepackage{amsmath,amsthm,amssymb}
\usepackage{enumitem}
\usepackage[hidelinks]{hyperref}
\usepackage{geometry}
\allowdisplaybreaks[4]

\theoremstyle{plain}
\newtheorem{theorem}{Theorem}
\newtheorem{lemma}[theorem]{Lemma}
\newtheorem{corollary}[theorem]{Corollary}
\newtheorem{claim}{Claim}

\newtheorem{proposition}[theorem]{Proposition}

\def \h{\mathcal{H}}
\def \G{\mathcal{G}}

\begin{document}

\title{A solution to Csikv\'ari's conjecture and the largest matching root of $k$-graphs \thanks{Supported by the National Natural Science Foundation of China (No. 12331012, 12571360)}}

\author{Jiang-Chao Wan \thanks{School of Mathematics and Statistics, Hefei University, Hefei 230601, Anhui, China},  Yi Wang \thanks{School of Mathematical Sciences, Anhui University, Hefei 230601, Anhui, China;   Corresponding author: wangy@ahu.edu.cn}}

\date{}

\maketitle

\begin{abstract}
In 2011, Csikv\'ari
[Electron. J. Combin. {\bf 18} (2011), $\#$P182]
proved that among all graphs with a prescribed number of edges, the largest matching root is attained by a threshold graph, and conjectured that the extremal graph should be `as star-like as possible.'
In this paper, we give a complete and affirmative answer to this problem and extend it to the setting of uniform hypergraphs.
We prove that for every $k$-graph $\mathcal{H}$ with $m$ edges, its largest matching root satisfies
$$\lambda(\mathcal{H})\le m^{1/k},$$
with equality if and only if $\mathcal{H}$ is intersecting.
For $k=2$, after deleting all isolated vertices, the resulting graph must be the star $K_{1,m}$ or a triangle, thereby confirming Csikv\'ari's conjecture.
Moreover, if the matching number $\nu(\mathcal{H})\ge 2$,
then
\[
\lambda(\mathcal{H})\le \left(\frac{m+\sqrt{m^2-4(\nu(\mathcal{H})-1)}}{2}\right)^{1/k},
\]
with equality if and only if $\nu(\mathcal{H})=2$ and $\mathcal{H}$ has exactly one $2$-matching.
\end{abstract}



\section{Introduction}

Throughout this paper, all graphs are finite, simple, and undirected.
Let $G$ be an $n$-vertex graph.
A \emph{matching} of $G$ is a subset of its edges such that no two share a common vertex.
Denote by $p(G,i)$ the number of matchings in $G$ consisting of $i$ edges.
In their seminal work, Heilmann and Lieb~\cite{Heilmann} introduced the \emph{matching polynomial} of graph $G=(V,E)$ as follows
\[
\mu(G,x)=\sum_{i=0}^{\lfloor n/2\rfloor}(-1)^i p(G,i)\,x^{n-2i}.
\]
The celebrated Heilmann--Lieb~\cite{Heilmann} theorem states that all zeros of $\mu(G,x)$ are real and symmetric about the origin.
Consequently, the largest real root $\lambda(G)$ of $\mu(G,x)$ is a well-defined real number, and we simply call it the \emph{largest matching root} of $G$.
The extremal problem concerning the largest matching root dates back to the celebrated paper by Lov\'asz
and Pelik\'an~\cite{LovaszPe} in 1973,
where they determined the first two largest matching roots among trees with a prescribed number of edges.
Thereafter, many researchers studied the problem of determining the graphs for which the largest matching root achieves the maximum in a given family of graphs (see, e.g.,~\cite{Csikvari,Liu,Suejc}).

Confirming a conjecture of Brualdi and Hoffman~\cite{Brualdi}, Rowlinson \cite{Rowlinson} proved that the graph $G_m$ attains the maximum spectral radius among all graphs with $m=\binom{s}{2}+t$ edges, where $G_m$ is obtained from the complete graph $K_s$ by adding a new vertex and $t$ new edges.
Inspired by this classical result in spectral graph theory,
in 2011, Csikv\'ari \cite{Csikvari} proved that among all graphs with a prescribed number of edges, the maximum matching root is always attained by a \emph{threshold graph}.
In the concluding remarks of his paper, he posed the following natural extremal problem:

\begin{quote}
\emph{``Now if we consider the problem of finding the graph
maximizing the largest root of the matching polynomial among graphs with prescribed number of edges, the situation is completely different.
We believe that the Kelmans transformation works because it generates some large-degree vertices.
We conjecture that in this case the extremal graph will be as `star-like' as it is possible:
it has as many vertices of degree $n-1$ as it is possible and one more vertex of the clique part of the threshold graph has some additional edges.''}
\begin{flushright}
--- Csikv\'ari \cite[Section~11]{Csikvari}
\end{flushright}
\end{quote}

In this paper, we give a solution to Csikv\'ari's conjecture.
We determine all graphs whose largest matching root attains the maximum among graphs with a prescribed number of edges.
The star $K_{1,m}$ with $m$ edges and the triangle are the extremal graphs.


\begin{theorem}\label{CsikvariConjgraph}
Let $G$ be a graph with $m$ edges, and let $\lambda(G)$ be the largest matching root of $G$.
Then
$$\lambda(G)\leq \sqrt{m}.$$
Moreover, equality holds if and only if, possibly with some isolated vertices added, $G$ is
\begin{itemize}
\item the star $K_{1,m}$ for $m=1,2$ or $m\ge 4$, and

\item either the star $K_{1,3}$ or the triangle for $m=3$.

\end{itemize}
%
\end{theorem}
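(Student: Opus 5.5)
We prove $\lambda(G)\le\sqrt{m}$ by induction on $m$, using the Godsil path-tree/recursion structure of matching polynomials. The key identity is the standard recursion at a vertex $v$:
\[
\mu(G,x)=x\,\mu(G-v,x)-\sum_{u\sim v}\mu(G-v-u,x).
\]
We also use the fact that $\lambda(G-v)\le\lambda(G)$, and that $\mu(G-v,x)/\mu(G,x)$ is positive and decreasing for $x>\lambda(G)$. It therefore suffices to show $\mu(H,x)>0$ for all $x>\sqrt{m}$, together with analysing the boundary value $x=\sqrt m$.

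The cleanest route avoids recursion and works with the moments. Let the roots of $\mu(G,x)$ be $\pm\theta_1,\dots,\pm\theta_r$, and $0$ with some multiplicity. The coefficient of $x^{n-2}$ gives
\[
\sum_j\theta_j^2=p(G,1)=m.
\]
Since $\lambda(G)=\theta_{\max}$ and all $\theta_j^2\ge 0$, we get $\lambda(G)^2\le\sum_j\theta_j^2=m$, where the sum is over the positive-root half (each $\pm$ pair counted once). This immediately gives the inequality.

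Equality holds iff $\lambda^2=m$ is the only nonzero $\theta_j^2$, i.e.\ $\mu(G,x)=x^{n-2}(x^2-m)$. That happens iff $p(G,i)=0$ for all $i\ge2$, i.e.\ the matching number is $1$. Equivalently, every two edges intersect.

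The remaining step is the characterization of intersecting graphs. Suppose, after deleting isolated vertices, $G$ is pairwise intersecting, and take two edges $ab$ and $ac$. If some edge avoids $a$, it must meet both $ab$ and $ac$, so it is exactly $bc$. Then any further edge must meet $ab$, $ac$ and $bc$ pairwise without creating two disjoint edges, and with three vertices this forces $G$ to be the triangle. Otherwise all edges contain $a$ and $G$ is a star $K_{1,m}$. The triangle has $m=3$, which yields the stated case distinction.

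The only delicate point is the bookkeeping of the root multiset: one must make sure $p(G,1)$ equals the sum of squares over one root from each symmetric pair. This follows from the Vieta formula for $\mu(G,x)=x^{n-2r}\prod_j(x^2-\theta_j^2)$, with real-rootedness guaranteed by Heilmann--Lieb. I expect no real obstacle beyond this.
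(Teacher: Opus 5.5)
Your proof is correct, and it takes a genuinely different route from the paper; for $k=2$ it is much shorter. You use the full Heilmann--Lieb theorem. Because every root of $\mu(G,x)=x^{n-2\nu}\phi(G,x^2)$ is real, the roots of $\phi(G,y)$ are positive reals (nonzero since $\phi(G,0)=\pm p(G,\nu)\neq 0$), and Vieta says they sum to $p(G,1)=m$. So the largest root is at most $m$, with equality exactly when there is a single root, i.e.\ $\nu(G)=1$. Your reduction of ``intersecting'' to ``star or triangle'' is also fine; the case $m=1$ is trivial and should be noted separately, since your argument starts from two edges.

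The paper instead proves the hypergraph Theorem~\ref{CsikvariConjHgraph} and specializes to $k=2$. It never uses that all roots are real, only that $\lambda(\mathcal H)^k$ is the largest real root of $\phi(\mathcal H,x)$. It then shows $\phi(\mathcal H,x)>0$ for $x\ge\varrho(\nu-1)$ by working directly with the coefficients, via the double-counting bound $(i+1)p_{i+1}\le(m-i)p_i$ and $p_2\ge\binom{\nu}{2}$. That extra work buys generality: for $k\ge 3$, $\phi(\mathcal H,x)$ need not be real-rooted, so your sum-of-roots argument is unavailable. For example, take the $3$-graph consisting of $\{a,b,c\}$ together with three pairwise disjoint edges through $a$, $b$, $c$ respectively; then $\phi=x^3-4x^2+3x-1$, which has two non-real roots. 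The coefficient method also gives the sharper $\nu$-dependent bound. Your opening paragraph about the vertex recursion is never used and can be dropped.
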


Moreover, our methods allow us to solve Csikv\'ari's problem in the setting of uniform hypergraphs.


A {\it $k$-uniform hypergraph} (or {\it $k$-graph} for short) $\h=(V(\h), E(\h))$ consists of a vertex set $V(\h)$ and an edge set $E(\h)$,
where each edge $e\in E(\h)$ is a $k$-element subset of $V(\h)$.
Clearly, $2$-graph are ordinary graphs.
A {\em matching} in $\h$ is a set of vertex-disjoint edges, and an $r$-matching is a matching consisting of $r$ edges.
The maximum number of edges in a matching of $\h$ is called the {\em matching number} of $\h$ and denoted $\nu(\h)$.
To study the spectral radius of the adjacency tensor of $k$-graphs, Su et al.~\cite{Suejc} introduced the {\em matching polynomial} of a $k$-graph $\h$ as follows:
$$
\mu(\h,x)=\sum_{r\geq 0}(-1)^rp(\h,r) x^{|V(\h)|-kr},
$$
where $p(\h,r)$ denotes the number of $r$-matchings of $\h$.

Denote by $\lambda(\h)$ the maximum modulus of the zeros of the matching polynomial of a $k$-graph $\h$.
Recently, the authors~\cite{WanWF} established a hypergraph Heilmann--Lieb theorem.
In particular, they proved that for a connected $k$-graph $\h$, $\lambda(\h)$ is a simple root of $\mu(\h,x)$,
so $\lambda(\h)$ is the largest root of $\mu(\h,x)$
(see~\cite[Theorem 1.2]{WanWF} for details).
Therefore, it is reasonable to refer to $\lambda(\h)$ as the {\it largest matching root} of $\h$ and to consider extremal problems for $\lambda(\h)$.


The main result of this paper provides the following universal bound, valid for every uniformity $k\geq 2$.

\begin{theorem}\label{CsikvariConjHgraph}
Let $k\ge 2$ and let $\mathcal{H}$ be a $k$-graph with $m$ edges.
Then
$$
\lambda(\mathcal{H})\leq m^{1/k}.
$$
with equality if and only if $\mathcal{H}$ is intersecting, i.e.\ every two edges have a non-empty intersection.
Moreover, if $\nu(\mathcal H) \geq 2$, then
$$
\lambda(\mathcal H)\leq
\left(\frac{m+\sqrt{m^2-4(\nu(\mathcal H)-1)}}{2}\right)^{1\over k},
$$
with equality if and only if $\nu(\mathcal H)=2$ and $p(\mathcal H,2)=1$.
\end{theorem}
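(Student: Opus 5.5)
The plan is to pass from $x$ to $y=x^k$ and then use Vieta's formulas. Since every monomial of $\mu(\mathcal H,x)$ has exponent $|V(\mathcal H)|-kr$, writing $\nu=\nu(\mathcal H)$ gives
\[
\mu(\mathcal H,x)=x^{|V(\mathcal H)|-k\nu}\,g(x^k),\qquad g(y)=\sum_{r=0}^{\nu}(-1)^r p(\mathcal H,r)\,y^{\nu-r}.
\]
Hence $\lambda(\mathcal H)^k$ is the maximum modulus of a root of $g$. Let $y_1,\dots,y_\nu$ be the roots of $g$, listed with multiplicity. Vieta gives $\sum_i y_i=p(\mathcal H,1)=m$, $e_2(y)=p(\mathcal H,2)$ and $\prod_i y_i=p(\mathcal H,\nu)\ge 1$.

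The key structural input, and the step I expect to be the main obstacle, is that all $y_i$ are real and nonnegative. For $k=2$ this is exactly Heilmann--Lieb, because the roots of $\mu$ are real and symmetric, so their squares are $\ge 0$. For general $k$ I would try to derive it from the hypergraph Heilmann--Lieb theorem of \cite{WanWF}. The route I have in mind is the path-tree representation: it should show that $\mu(\mathcal H,x)$ divides the matching polynomial of a $k$-uniform hypertree. For a hypertree, the $y$-polynomial is the characteristic polynomial of a symmetric positive-semidefinite-type object, for instance via the bipartite vertex--edge incidence structure whose matching polynomial is real-rooted. If only the weaker statement is available, namely that $\lambda(\mathcal H)^k$ is a positive real root of $g$ dominating all others in modulus, I would fall back on proving $g(y)>0$ for $y$ beyond the claimed bounds. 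That would be done by induction on edges, using the vertex-deletion recurrence for $\mu$.

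Assuming $y_i\ge 0$, the first bound is immediate. Let $y_1=\lambda(\mathcal H)^k$. Then $y_1\le\sum_i y_i=m$, with equality iff $y_2=\dots=y_\nu=0$. Since $\prod_i y_i=p(\mathcal H,\nu)\neq 0$, that happens iff $\nu=1$, i.e.\ iff $\mathcal H$ is intersecting. Conversely, for intersecting $\mathcal H$ we have $g(y)=y-m$, so equality holds.

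For the second bound, set $s=\sum_{i\ge2}y_i=m-y_1$. The claimed inequality is equivalent to $y_1^2-my_1+(\nu-1)\le 0$, i.e.\ to $y_1 s\ge \nu-1$. To prove this, let $G$ be the geometric mean of $y_2,\dots,y_\nu$. AM--GM gives $s\ge(\nu-1)G$, and $G^{\nu-1}y_1=p(\mathcal H,\nu)\ge1$. Therefore
\[
y_1G\ge y_1^{(\nu-2)/(\nu-1)}\,p(\mathcal H,\nu)^{1/(\nu-1)}\ge 1 .
\]
Here $y_1\ge1$, because $y_1$ is at least the geometric mean of all the roots, which is $\ge1$. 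Thus $y_1s\ge(\nu-1)y_1G\ge\nu-1$, and $y_1$ lies below the larger root of $y^2-my+\nu-1$.

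For the equality case, equality forces all of $y_2,\dots,y_\nu$ to be equal and $y_1^{(\nu-2)/(\nu-1)}p(\mathcal H,\nu)^{1/(\nu-1)}=1$. If $\nu\ge3$, this forces $y_1=1$, and then every $y_i=1$. But then $m=\nu$, and the bound $(\nu+(\nu-2))/2=\nu-1>1$ is strict, a contradiction. So $\nu=2$ and $p(\mathcal H,2)=1$. Conversely, in that case $g(y)=y^2-my+1$, whose largest root is exactly the stated value.
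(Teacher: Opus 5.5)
\textbf{There is a genuine gap.} The step you flagged as the main obstacle is not merely hard; it is false for $k\ge 3$. The roots of $g$ need not be real, even when $\mathcal H$ is a hypertree. Take the $3$-graph with edges $\{1,2,3\},\{1,4,5\},\{2,6,7\},\{3,8,9\}$, which is a linear hypertree. Here $m=4$, $p(\mathcal H,2)=3$, $p(\mathcal H,3)=1$, and so $g(y)=y^3-4y^2+3y-1$. Its discriminant is $216-256+144-108-27=-31<0$. Hence $g$ has one real root, approximately $3.15$, and a pair of non-real conjugate roots.

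Because this example is itself a hypertree, your planned route through hypertrees and a positive-semidefinite model cannot work. Once non-real roots are allowed, both pillars of your argument fail:
\begin{itemize}
\item $y_1\le\sum_i y_i$ requires $\sum_{i\ge 2}y_i=m-y_1\ge 0$, which is exactly the bound you are trying to prove.
\item AM--GM on $y_2,\dots,y_\nu$ needs nonnegative reals. Passing to moduli gives $\sum_{i\ge2}|y_i|\ge |m-y_1|$, which points in the wrong direction.
\end{itemize}
For $k=2$, where Heilmann--Lieb gives $y_i\ge 0$, your Vieta/AM--GM argument is correct, including both equality cases. It is a neat alternative to the paper's proof in that case, but it does not reach the hypergraph statement.

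What \cite{WanWF} actually provides is your ``weaker statement'': $\theta(\mathcal H)=\lambda(\mathcal H)^k$ is the largest real root of $g$ and dominates all roots in modulus. The entire proof must then be a positivity argument for $g$ beyond the bound. Your fallback (``induction on edges via the vertex-deletion recurrence'') is left unspecified, and it is not clear how a bound depending on both $m$ and $\nu$ would pass through that recurrence.

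The missing idea in the paper is to work directly with coefficients. Double counting gives $(i+1)p_{i+1}\le(m-i)p_i$. Set $x_0=\frac{m+\sqrt{m^2-4(\nu-1)}}{2}$, so that $x_0\ge m-1$. For $y\ge x_0$ the terms $p_iy^{\nu-i}$ with $i\ge 4$ are then strictly decreasing, so the alternating tail $\sum_{i\ge 4}(-1)^ip_iy^{\nu-i}$ is positive. The head $y^{\nu-3}\bigl(y(y^2-my+p_2)-p_3\bigr)$ is shown positive using:
\begin{itemize}
\item the identity $x_0^2-mx_0=1-\nu$;
\item $p_2\ge\binom{\nu}{2}$, which holds because any two edges of a maximum matching form a $2$-matching;
\item $p_3\le\frac{m-2}{3}p_2$.
\end{itemize}
The cases $\nu=1,2$ are explicit, since $g$ is linear or quadratic there. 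The first bound, with its equality case, then follows because $x_0<m$ whenever $\nu\ge 2$.
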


When $k=2$, the condition that every two edges intersect forces the graph to be a star $K_{1,m}$ for $m=1,2$ or $m\ge 4$, and yields two extremal graphs $K_{1,3}$ and $K_3$ when $m=3$.
Thus, Theorem~\ref{CsikvariConjHgraph} immediately implies Theorem~\ref{CsikvariConjgraph}.

For $k\geq 3$, the extremal family is richer:
there exist numerous intersecting $k$-graphs with $m$ edges that attain the extremal bound in Theorem~\ref{CsikvariConjHgraph}.
Furthermore, for every $k\ge 3$ and every $m\ge 2$, we can construct a $k$-graph $\mathcal H_m^{(k)}$ with $m$ edges such that $\nu(\mathcal H_m^{(k)})=2$ and $p(\mathcal H_m^{(k)},2)=1$ as follows:
Take two disjoint edges $e_1$ and $e_2$ with $u\in e_1$ and $v\in e_2$,
and define the remaining $m-2$ edges as $f_i = \{u,v\}\cup W_i$ for $i=1,\dots,m-2$,
where $W_1,\dots,W_{m-2}$ are pairwise disjoint $(k-2)$-sets of new vertices and each $W_i$ disjoint from $e_1\cup e_2$.



A $k$-graph $\mathcal H$ is called {\em $t$-intersecting} if $|A\cap B|\geq t$ for every $A,B\in E(\mathcal H)$.
Erd\H{o}s, Ko, and Rado~\cite{Erdos} proved that there exists an integer $n_0(k,t)$ such that if $\mathcal H$ is an $n$-vertex $t$-intersecting $k$-graph and $n\geq n_0(k,t)$, then
\[
|E(\mathcal H)| \leq \binom{n-t}{k-t}.
\]
Wilson~\cite{Wilson} showed that the smallest possible such $n_0(k,t)$ is $(t+1)(k-t+1)$,
and for $n> (t+1)(k-t+1)$, equality holds if and only if $\mathcal H$ is a $t$-star, i.e.,
$E(\mathcal H)=\{S\in \binom{[n]}{k}: T\subseteq S\}$ for a fixed $T\in \binom{[n]}{t}$.
Note that Theorem~\ref{CsikvariConjHgraph} implies $\lambda(\mathcal{H})=|E(\mathcal H)|^{1/k}$ for every intersecting $k$-graph $\mathcal H$.
Combining these results, we obtain the following largest matching root version of the Erd\H{o}s--Ko--Rado theorem.

\begin{corollary}
Let $k\geq 2$, $t\geq 1$,
and let $\mathcal{H}$ be an $n$-vertex $k$-graph.
If $\mathcal{H}$ is $t$-intersecting and $n\geq (t+1)(k-t+1)$, then
$$\lambda(\mathcal{H})\leq \binom{n-t}{k-t} ^{1/k},$$
with equality for $n>(t+1)(k-t+1)$ if and only if $\mathcal H$ is a $t$-star.
\end{corollary}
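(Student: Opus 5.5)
This corollary follows by combining Theorem~\ref{CsikvariConjHgraph} with the Erd\H{o}s--Ko--Rado--Wilson bound. Let $\mathcal H$ be $t$-intersecting with $m$ edges. Since $t\ge 1$, any two edges meet in at least $t\ge1$ vertices, so $\mathcal H$ is intersecting. The equality case of Theorem~\ref{CsikvariConjHgraph} then gives $\lambda(\mathcal H)=m^{1/k}$ exactly; in particular $\lambda(\mathcal H)$ is a strictly increasing function of $m$ within this class. The statement therefore reduces to bounding $m=|E(\mathcal H)|$ and identifying when that bound is attained.

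For the upper bound, I would invoke Wilson's theorem. It states that for $n\ge (t+1)(k-t+1)$ every $n$-vertex $t$-intersecting $k$-graph satisfies $m\le\binom{n-t}{k-t}$. Hence
\[
\lambda(\mathcal H)=m^{1/k}\le \binom{n-t}{k-t}^{1/k}.
\]

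For the equality case with $n>(t+1)(k-t+1)$, I argue in both directions. If $\lambda(\mathcal H)=\binom{n-t}{k-t}^{1/k}$, then $m=\binom{n-t}{k-t}$ because $x\mapsto x^{1/k}$ is injective on nonnegative reals. Wilson's uniqueness statement then forces $\mathcal H$ to be a $t$-star. Conversely, a $t$-star is $t$-intersecting, since every edge contains the fixed $t$-set $T$, and it has exactly $\binom{n-t}{k-t}$ edges. By the intersecting case above it attains equality.

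I expect no real obstacle here. The only subtle point is the degenerate situation $m=0$, or a hypergraph with isolated vertices. The identity $\lambda=m^{1/k}$ still holds there: when $m=0$ the matching polynomial is $x^{n}$, so $\lambda=0=m^{1/k}$. Vertices not covered by any edge are harmless, because Wilson's theorem counts only edges among $n$ labelled vertices.
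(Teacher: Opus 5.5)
Your proposal is correct and is essentially the paper's argument: $t\ge 1$ makes $\mathcal H$ intersecting, so Theorem~\ref{CsikvariConjHgraph} gives $\lambda(\mathcal H)=|E(\mathcal H)|^{1/k}$, and the bound and its equality case then follow from the Erd\H{o}s--Ko--Rado theorem with Wilson's threshold and uniqueness statement. Your handling of the degenerate case $m=0$ (where $\mu(\mathcal H,x)=x^n$) is a small addition that the paper leaves implicit.
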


\section{Proof of Theorem~\ref{CsikvariConjHgraph}}

This section is devoted to proving Theorem~\ref{CsikvariConjHgraph}.
We begin with the following lemma, which will be used when $k$-graphs are disconnected.

\begin{lemma}[\cite{Suejc}]\label{disjointunion}
Let $\G$ and $\h$ be two vertex-disjoint $k$-graphs.
For the disjoint union $\G\oplus\h$ of $\G$ and $\h$, we have	
    $$\mu(\G\oplus \h, x)=\mu(\G, x)\mu(\h, x).$$
\end{lemma}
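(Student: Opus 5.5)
The plan is to argue directly from the definition
\[
\mu(\mathcal H,x)=\sum_{r\ge 0}(-1)^r p(\mathcal H,r)\,x^{|V(\mathcal H)|-kr}.
\]
A matching of $\G\oplus\h$ is exactly a pair $(M_1,M_2)$, where $M_1$ is a matching of $\G$ and $M_2$ is a matching of $\h$. This is because edges of $\G$ and edges of $\h$ never share a vertex: the two hypergraphs are vertex-disjoint. Hence for every $r\ge 0$ we get the convolution identity
\[
p(\G\oplus\h,r)=\sum_{i+j=r}p(\G,i)\,p(\h,j),
\]
since choosing an $r$-matching of the union amounts to choosing an $i$-matching of $\G$ and a $j$-matching of $\h$ with $i+j=r$. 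The decomposition is unique, because each edge belongs to exactly one of the two hypergraphs.

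Next, expand the product
\[
\mu(\G,x)\mu(\h,x)=\sum_{i,j\ge 0}(-1)^{i+j}p(\G,i)p(\h,j)\,x^{|V(\G)|+|V(\h)|-k(i+j)}.
\]
Use $|V(\G\oplus\h)|=|V(\G)|+|V(\h)|$ and group the terms according to $r=i+j$. The coefficient of $(-1)^r x^{|V(\G\oplus\h)|-kr}$ is then exactly the convolution above, so the product equals $\mu(\G\oplus\h,x)$.

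There is no real obstacle. The only point needing care is that the bijection between matchings of the union and pairs of matchings is well defined, which follows immediately from vertex-disjointness. I also note that all sums are finite, since $p(\cdot,r)=0$ once $kr$ exceeds the number of vertices.
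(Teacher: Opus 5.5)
Your proof is correct. The bijection between matchings of $\mathcal{G}\oplus\mathcal{H}$ and pairs (matching of $\mathcal{G}$, matching of $\mathcal{H}$) gives $p(\mathcal{G}\oplus\mathcal{H},r)=\sum_{i+j=r}p(\mathcal{G},i)\,p(\mathcal{H},j)$, and comparing coefficients in the product then finishes the argument. The paper does not prove this lemma itself but cites it from Su et al.; your direct counting argument is the standard way to establish it.
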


\begin{lemma}[\cite{WanWF}]\label{prelemma}
Let $\h$ be a connected $k$-graph.
Then $\lambda(\mathcal{H})$ is a simple root of $\mu(\h,  x)$
and $\mu(\h,  x)$ has exactly $k$ distinct zeros with modulus $\lambda(\h)$
    $$\lambda(\h) e^{\frac{2\pi j }{k}\mathbf{i}}, j = 0, 1,\ldots, k-1,$$
      where $\mathbf{i}$ is the imaginary unit.
\end{lemma}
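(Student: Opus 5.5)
The plan is to separate the statement into a symmetry part, which is formal, and a Perron--Frobenius type part, which carries all the difficulty. I have not checked the hard step in detail.

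\emph{Symmetry.} Put $n=|V(\h)|$ and $N=\lfloor n/k\rfloor$. Every exponent $n-kr$ in $\mu(\h,x)$ is congruent to $n$ modulo $k$, so
\[
\mu(\h,x)=x^{\,n-kN} f(x^k),\qquad f(y)=\sum_{r\ge 0}(-1)^r p(\h,r)\,y^{N-r}.
\]
Hence the nonzero zeros of $\mu(\h,x)$ come in full orbits under multiplication by $e^{2\pi \mathbf{i}/k}$, with equal multiplicities along each orbit. Every nonzero root of $\mu$ is a $k$-th root of a root of $f$. So the lemma is equivalent to one statement about $f$: $f$ has a unique root of maximum modulus, this root is the positive real number $\lambda(\h)^k$, and it is simple. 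Indeed, a simple positive root $\rho$ of $f$ gives exactly the $k$ simple zeros $\rho^{1/k}e^{2\pi \mathbf{i}j/k}$ of $\mu$. (If $\h$ has an edge then $\rho>0$, so the factor $x^{n-kN}$ does not interfere.)

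\emph{Reduction to a hypertree.} I would use the vertex recursion
\[
\mu(\h,x)=x\,\mu(\h-v,x)-\sum_{e\ni v}\mu(\h-e,x),
\]
where $\h-e$ deletes all vertices of $e$. This recursion lets one build a Godsil-type ``path tree'' $T=T(\h,v)$, a linear hypertree rooted at $v$ whose edges correspond to edge-paths leaving $v$. By induction on $|E(\h)|$, mirroring Godsil's proof, I would show
\[
\frac{\mu(\h-v,x)}{\mu(\h,x)}=\frac{\mu(T-v,x)}{\mu(T,x)}
\quad\text{and}\quad
\mu(\h,x)\ \text{divides}\ \mu(T,x).
\]
Connectivity of $\h$ makes $T$ connected. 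It also ensures that the pole of the ratio at the dominant root is actually attained, so $\lambda(\h)=\lambda(T)$.

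\emph{Hypertrees.} For a linear hypertree $T$, I would identify $\lambda(T)^k$ with the spectral radius of a nonnegative weakly irreducible tensor attached to $T$, for instance the adjacency tensor, using the known identification of the tensor's eigenvalues with the matching roots for hypertrees. Perron--Frobenius for such tensors then gives a positive eigenvalue equal to the spectral radius. Alternatively, one can argue directly on $T$ with the rooted ratios $g_u(x)=\mu(T_u-u,x)/\mu(T_u,x)$. These satisfy
\[
g_u(x)^{-1}=x-\sum_{e\ni u}\prod_{w\in e\setminus u} g_w(x).
\]
Inducting from the leaves, I would show that for $|x|>\lambda(T)$ no $\mu(T_u,x)$ vanishes. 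On the circle $|x|=\lambda(T)$, the triangle inequality $|\sum\prod g_w|\le\sum\prod|g_w|$ then forces equality only when all terms share the same argument. Connectivity propagates this condition through the tree, so equality happens only for $x^k>0$. This pins down the dominant roots of $f$ to $\rho$ alone. Simplicity then follows from a derivative identity: $\mu'(T,x)=\sum_u \mu(T-u,x)$, and each summand has the same sign at $\rho^{1/k}$ and at least one is nonzero.

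\emph{Main obstacle.} The hardest step is the transfer from $\h$ to $T$, in particular showing that the dominant root of $\mu(\h,x)$ is simple and not merely a root of $\mu(T,x)$. Divisibility alone allows the multiplicity in $\h$ to be lower but cannot exclude extra peripheral roots. I would handle this by running the ratio/triangle-inequality argument directly on $\h$ through the identity $\mu(\h-v)/\mu(\h)=\mu(T-v)/\mu(T)$. That identity forces any zero of $\mu(\h,x)$ on $|x|=\lambda(\h)$ to be a pole of the tree ratio. Those poles are exactly $\lambda(T)e^{2\pi \mathbf{i}j/k}$ and are simple.
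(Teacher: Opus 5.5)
The paper does not prove this lemma. It is quoted from the authors' earlier hypergraph Heilmann--Lieb paper, so your proposal has to stand on its own. Your architecture is sound: a Godsil-type path tree with the ratio identity and divisibility, then a ratio/triangle-inequality analysis on the hypertree, then the $k$-fold symmetry of $\mu(\mathcal{H},x)$. But the argument you give for the step you call the main obstacle is not valid.

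\textbf{The transfer from $T$ to $\mathcal{H}$.} The ratio identity cannot see a zero $z$ of $\mu(\mathcal{H},x)$ that is also a zero of $\mu(\mathcal{H}-v,x)$. At such a $z$, $\mu(\mathcal{H}-v,x)/\mu(\mathcal{H},x)$ need not have a pole, so nothing forces a zero on $|x|=\lambda(\mathcal{H})$ to be a pole of the tree ratio. Even at a genuine pole, a simple pole only says the multiplicity in $\mu(\mathcal{H},x)$ exceeds that in $\mu(\mathcal{H}-v,x)$ by one, which is not simplicity.

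Divisibility is not the weak link; it is what finishes the proof.
\begin{itemize}
\item Write $\mu(T,x)=q(x)\mu(\mathcal{H},x)$. The ratio identity then gives $\mu(T-v,x)=q(x)\mu(\mathcal{H}-v,x)$.
\item Since $\mu(T-v,\lambda(T))>0$ (a tree fact, see below), $q(\lambda(T))\neq 0$, so $\mu(\mathcal{H},\lambda(T))=0$.
\item Divisibility says every zero of $\mu(\mathcal{H},x)$ is a zero of $\mu(T,x)$ of no larger multiplicity. Hence $\lambda(\mathcal{H})=\lambda(T)$, and the zeros of modulus $\lambda(\mathcal{H})$ lie among the simple zeros $\lambda(T)e^{2\pi\mathbf{i}j/k}$ of $\mu(T,x)$.
\item Symmetry shows all $k$ of them occur.
\end{itemize}
Connectivity is used only here, in divisibility: each component of $\mathcal{H}-v$ must be charged to a distinct branch of $T-v$. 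It is not what produces the pole, and $T$ is connected by construction anyway.

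\textbf{The path tree.} It cannot be a symmetric ``edge-path'' tree. For $e=\{v,w_1,\dots,w_{k-1}\}$, the branch at $w_j$ must be the path tree of $\mathcal{H}-v-w_1-\cdots-w_{j-1}$ rooted at $w_j$. This nesting is what makes $\mu(\mathcal{H}-V(e),x)/\mu(\mathcal{H}-v,x)$ telescope into a product of rooted ratios. With branches $T(\mathcal{H}-v,w_j)$ instead, the identity already fails for $E(\mathcal{H})=\{\{v,a,b\},\{a,b,c\}\}$.

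\textbf{The tree analysis.} Inducting from the leaves does not compare $\lambda(T_u)$ with $\lambda(T)$. The fact you need is this: as real $r$ decreases from $\infty$, the first vertex with $g_u(r)^{-1}=0$ is the root. If $u$ had a parent $p$, then $g_p(r)^{-1}\to-\infty$ as $r$ decreases to that value, so $g_p^{-1}$ would have vanished first. This fact gives:
\begin{itemize}
\item $\mu(T_u,r)>0$ for every non-root $u$ and every $r\ge\lambda(T)$;
\item hence $|g_u(x)|\le g_u(|x|)$ for non-root $u$ and $|x|\ge\lambda(T)$, and $\mu(T-v,\lambda(T))>0$ as used above;
\item after rerooting at an arbitrary $u$, $\mu(T-u,\lambda(T))>0$ for every $u$, which is what your derivative identity $\mu'(T,x)=\sum_u\mu(T-u,x)$ needs.
\end{itemize}
The equality case is then explicit. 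Every non-root $g_u(x)$ has argument $-\arg x$, so each edge product at the root has argument $-(k-1)\arg x$. This must equal $\arg x$, which forces $x^k>0$.

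\textbf{The tensor alternative.} As you state it, it only yields a positive eigenvalue. It gives neither simplicity nor the exclusion of other peripheral roots of $\mu(T,x)$, so the direct ratio route is the one that does the work.
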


To simplify dealing with $\lambda(\mathcal{H})$, we introduce the following polynomial:
$$\phi(\mathcal H,x)=\sum_{i=0}^{\nu(\mathcal{H})}(-1)^i p(\mathcal H,i)\,x^{\nu(\mathcal{H})-i}.$$
Observe that
\[
\mu(\mathcal H,x)
=x^{|V(\mathcal H)|-k\nu(\mathcal{H})}\phi(\mathcal H,x^k),
\]
and the non-zero zeros of $\mu(\mathcal H,x)$ are precisely the $k$-th roots of the zeros of $\phi(\mathcal H,x)$.
Denote by $\theta(\mathcal{H})$ the maximum modulus of the zeros of $\phi(\mathcal H,x)$.
If $\mathcal{H}$ has $t$ components $\mathcal{H}_1, \ldots, \mathcal{H}_t$, then Lemmas~\ref{disjointunion} and~\ref{prelemma} implies that
$$\lambda(\mathcal{H})^k
=\max_{1\leq i \leq t}  \{\lambda(\mathcal{H}_i)^k\}
=\max_{1\leq i \leq t}  \{\theta(\mathcal{H}_i)\}
 =\theta(\mathcal{H}).$$
So, for every $k$-graph $\mathcal{H}$, we have $\theta(\mathcal{H})=\lambda(\mathcal{H})^k$ and $\theta(\mathcal{H})$ is the largest real root of $\phi(\mathcal H,x)$.
Therefore, one may translate the extremal problems for $\lambda(\mathcal{H})$ into the corresponding problems for $\theta(\mathcal{H})$.

\begin{lemma}\label{lem:rec}
Let $\mathcal H$ be a $k$-graph with $m$ edges. For $1\leq i<\nu(\mathcal H)$,
\[
(i+1)p(\mathcal H, i+1)\leq (m-i)p(\mathcal H, i).\]
\end{lemma}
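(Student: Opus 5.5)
We prove the inequality by double counting the pairs $(M,e)$, where $M$ is an $(i+1)$-matching of $\mathcal H$ and $e\in M$ is one of its edges.

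Counting by $M$ first, each $(i+1)$-matching contains exactly $i+1$ edges, so the number of pairs is exactly $(i+1)p(\mathcal H,i+1)$.

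Counting by $M'=M\setminus\{e\}$ instead, the map $(M,e)\mapsto (M',e)$ is a bijection onto the pairs $(M',e)$ in which $M'$ is an $i$-matching and $e\in E(\mathcal H)\setminus M'$ is disjoint from every edge of $M'$. For a fixed $i$-matching $M'$, the admissible edges $e$ lie among the $m-i$ edges outside $M'$. Hence the number of pairs is at most $(m-i)p(\mathcal H,i)$.

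Comparing the two counts gives
\[
(i+1)p(\mathcal H,i+1)\le (m-i)p(\mathcal H,i),
\]
which is the claimed inequality. The hypothesis $i<\nu(\mathcal H)$ is not used in the counting; it only guarantees that both sides are meaningful (in particular $p(\mathcal H,i)>0$), as needed in the later applications.

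The only point that needs care is the bijection: an $(i+1)$-matching together with a marked edge determines the pair $(M',e)$, and conversely $M=M'\cup\{e\}$. This is routine, so there is no real obstacle.
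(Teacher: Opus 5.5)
Your proposal is correct and follows the paper's proof: both double count pairs $(M,e)$ with $M$ an $(i+1)$-matching and $e\in M$, getting exactly $(i+1)p(\mathcal H,i+1)$ one way and at most $(m-i)p(\mathcal H,i)$ by first choosing the $i$-matching $M\setminus\{e\}$ and then an edge outside it. Your added remark that the hypothesis $i<\nu(\mathcal H)$ is not actually needed for the inequality is also accurate.
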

\begin{proof}
We double-count the number $N$ of ordered pairs $(M,e)$, where $M$ is an $(i+1)$-matching of $\mathcal H$ and $e$ is an edge contained in $M$.
On the one hand, for each $(i+1)$-matching $M$ in $\mathcal H$, there are $i+1$ choices of $e\in M$.
So we get $N=(i+1)p(\mathcal H,i+1)$.
On the other hand,
we may construct such a pair $(M,e)$
by first choosing an $i$-matching $M'$ in $\mathcal H$ and then an edge $e\notin M'$ such that $M = M'\cup\{e\}$ is an $(i+1)$-matching.
Clearly, for a fixed $i$-matching $M'$, the number of edges $e\notin M'$ is at most $m-i$.
So we deduce that $(i+1)p(\mathcal H,i+1)=N\leq (m-i)p(\mathcal H,i)$, as desired.
\end{proof}

We are now ready to prove the main result of this paper.

\begin{proof}[\proofname{ of \bf Theorem~\ref{CsikvariConjHgraph}.}]
Let $k\ge 2$ and let $\mathcal{H}$ be a $k$-graph with $m$ edges.
Write $\nu:=\nu(\mathcal{H})$ and $p_i:=p(\mathcal{H},i)$.
Denote by $\theta(\mathcal{H})$ the largest root of $\phi(\mathcal H,x)$.
Then $\theta(\mathcal{H})=\lambda(\mathcal{H})^k$ as claimed before.
We next prove the corresponding bounds for $\theta(\mathcal{H})$ by considering the following three cases.

\medskip

\noindent
{\bf Case 1: $\nu=1$.} In this case, $\mathcal H$ is intersecting, $\phi(\mathcal{H},x)=x-m$, and $\theta(\mathcal{H})=m.$

\medskip

\noindent
{\bf Case 2}: $\nu=2$.
In this case,  we have $p_2\ge 1$ and $\phi(\mathcal{H},x)=x^2-mx+p_2$.
Hence,
\[
\theta(\mathcal{H})=\frac{m+\sqrt{m^2-4p_2}}{2}
\leq \frac{m+\sqrt{m^2-4}}{2}=\frac{m+\sqrt{m^2-4(\nu-1)}}{2},
\]
and equality holds if and only if $p_2=1$.

\medskip

\noindent\textbf{Case 3: $\nu\ge 3$.}
Define
\[
\varrho(s):=\frac{m+\sqrt{m^2-4s}}{2},~ 0\leq s\leq\frac{m^2}{4}.
\]
Observe that the function $\varrho(s)$ is strictly decreasing on its domain.
Set $x_0:=\varrho(\nu-1)$.
As $m\geq \nu$ and $\nu\geq 3$, we have $\frac{m^2}{4}\geq \nu-1$, which implies that $x_0$ is well-defined.
We next prove that
$\theta(\mathcal{H})=\lambda(\mathcal{H})^k< x_0$.
Since $\theta(\mathcal{H})$ is the largest real root of $\phi(\mathcal{H},x)$,
it suffice to prove that
$\phi(\mathcal{H},x)>0$ for every $x\geq x_0$.

\begin{claim}\label{claim111}
$x_0\geq m-1$ and $x_0^2-mx_0=1-\nu$.
\end{claim}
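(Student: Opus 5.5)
The identity $x_0^2-mx_0=1-\nu$ follows directly from the definition. Since $x_0=\varrho(\nu-1)=\frac{m+\sqrt{m^2-4(\nu-1)}}{2}$ is a root of $x^2-mx+(\nu-1)=0$, we get $x_0^2-mx_0+(\nu-1)=0$, which is the claimed equation. Well-definedness, i.e.\ $m^2\ge 4(\nu-1)$, was already checked in the text from $m\ge\nu\ge 3$.

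For $x_0\ge m-1$, I will show $\sqrt{m^2-4(\nu-1)}\ge m-2$. This suffices, because then $x_0\ge \frac{m+m-2}{2}=m-1$. Both sides are nonnegative since $m\ge 3$, so it is enough to square and compare. We have $m^2-4(\nu-1)\ge (m-2)^2=m^2-4m+4$, which is equivalent to $4m\ge 4\nu$, i.e.\ $m\ge\nu$. This holds because a $\nu$-matching consists of $\nu$ distinct edges.

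An alternative route uses monotonicity. Since $\varrho$ is strictly decreasing and $\nu-1\le m-1$, we get $x_0\ge\varrho(m-1)=\frac{m+(m-2)}{2}=m-1$, because $m^2-4(m-1)=(m-2)^2$. This also requires $m-1\le m^2/4$, which holds for all $m$.

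There is no real obstacle here. The only care needed is the sign check $m-2\ge 0$ before squaring, which is guaranteed by $m\ge\nu\ge3$.
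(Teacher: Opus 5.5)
Your proof is correct and is essentially the paper's argument: the paper also uses $\nu\le m$ to get $m^2-4(\nu-1)\ge m^2-4(m-1)=(m-2)^2$, hence $x_0\ge m-1$. It obtains $x_0^2-mx_0=1-\nu$ by direct calculation, which is the same as your observation that $x_0$ is a root of $x^2-mx+(\nu-1)$. Your alternative via the monotonicity of $\varrho$ is also valid, but it is the same inequality rephrased.
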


\begin{proof}
Since $\nu\le m$,
we have $m^2-4(\nu-1)\ge m^2-4(m-1)=(m-2)^2$, which implies that
$$
x_0=\frac{m+\sqrt{m^2-4(\nu-1)}}{2}\geq\frac{m+(m-2)}{2}=m-1.
$$
Moreover, $x_0^2 - m x_0 = 1 - \nu$ is obtained by direct calculation.
\end{proof}

\begin{claim}\label{claim222}
If $\nu\geq 4$ and $x\geq x_0$,
then
$$\sum_{i=4}^{\nu}(-1)^i p_i\,x^{\nu-i}>0.$$
\end{claim}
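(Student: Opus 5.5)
We pair consecutive terms of the alternating sum and use Lemma~\ref{lem:rec} to show that each even-indexed term dominates the following odd-indexed one once $x\ge x_0$. The basic estimate: for $4\le i<\nu$, Lemma~\ref{lem:rec} gives
\[
p_{i+1}\le \frac{m-i}{i+1}\,p_i ,
\]
so
\[
p_i x^{\nu-i}-p_{i+1}x^{\nu-i-1}\ \ge\ p_i x^{\nu-i-1}\Bigl(x-\frac{m-i}{i+1}\Bigr).
\]
By Claim~\ref{claim111}, $x\ge x_0\ge m-1$. For $i\ge 4$ and $m\ge \nu\ge 4$,
\[
\frac{m-i}{i+1}\le \frac{m-4}{5}<m-1 .
\]
Hence each bracket is strictly positive whenever $p_i>0$.

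\emph{Grouping.} Write the sum as
\[
\sum_{j\ge 2}\bigl(p_{2j}x^{\nu-2j}-p_{2j+1}x^{\nu-2j-1}\bigr),
\]
pairing $i=4$ with $5$, $6$ with $7$, and so on. Every pair with $2j+1\le \nu$ is strictly positive by the estimate above, because $p_{2j}\ge 1$ for all $2j\le\nu$. If $\nu$ is even, one term $p_\nu x^0=p_\nu>0$ is left unpaired, and it is positive. The sum is nonempty since $\nu\ge4$, so the total is strictly positive.

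\emph{Where care is needed.} The only delicate point is the range of Lemma~\ref{lem:rec}, which is stated for $1\le i<\nu$. The pairing only invokes it at even $i=2j$ with $2j+1\le\nu$, which lies in that range. The case $\nu=4$ is also covered: the sum reduces to the single term $p_4>0$. No further obstacle is expected. Everything else follows from Claim~\ref{claim111} together with the trivial bound $\frac{m-i}{i+1}<m-1$.
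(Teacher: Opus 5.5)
Your proof is correct and is essentially the paper's argument. The paper uses Lemma~\ref{lem:rec} and $x_0\ge m-1$ to show every consecutive ratio $p_i x/p_{i+1}$ exceeds $4$, and then appeals to the alternating-sum property. You make the underlying pairing explicit and only need the even-to-odd comparisons, which also makes the paper's separate treatment of $m=4$ unnecessary.
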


\begin{proof}
If $m=4$, then $\nu=4$
and $\sum_{i=4}^{\nu}(-1)^i p_i\,x^{\nu-i}=p_4=1>0$.
Assume that $m\geq 5$.
By Lemma~\ref{lem:rec} and Claim~\ref{claim111},
for $4\leq i< \nu$ and $x\geq x_0$,
one may calculate that
$$
\frac{p_i x^{\nu-i}}{p_{i+1}x^{\nu-i-1}}
=\frac{p_ix}{p_{i+1}}  \geq  \frac{(i+1)x_0}{m-i}
\geq \frac{(i+1)(m-1)}{m-4} \geq i+1>4.
$$
This implies that for every $x\geq x_0$, we have
\begin{equation}\label{equation1111}
p_4 x^{\nu-4}>p_5 x^{\nu-5}>\cdots>p_\nu x^{\nu-\nu}>0.
\end{equation}
Note that $\sum_{i=4}^{\nu}(-1)^i p_i\,x^{\nu-i}$ is alternating and its first term is positive.
The claim follows from~\eqref{equation1111}.
\end{proof}

Observe that
$$\phi(\mathcal{H},x)-\sum_{i=4}^{\nu}(-1)^i p_i x^{\nu-i}
=x^{\nu-3}(x A(x)-p_3),$$
where  $A(x)=x^2-mx+p_2$.
Combining this and Claim~\ref{claim222}, to prove that $\phi(\mathcal{H},x)>0$ for every $x\geq x_0$,
it suffices to prove
\begin{equation}\label{equation2222}
x A(x)-p_3 >0 \text{ for every } x\geq x_0.
\end{equation}
Using Claim~\ref{claim111}, we get that $A'(x)=2x-m\geq 2x_0-m\geq m-2>0$ for every $x\geq x_0$.
Hence,  for every $x\geq x_0$, we have  
\begin{equation}\label{equation3333}
A(x)\geq A(x_0)=x_0^2-mx_0+p_2=p_2- \nu+1,
\end{equation}
where the last equality follows from Claim~\ref{claim111}.
Note that every two distinct edges in a maximum matching form a $2$-matching, so we get $p_2\geq \binom{\nu}{2}$, which implies that $p_2-\nu+1>0$.
Applying Lemma~\ref{lem:rec} with $i=2$, we get $p_3\leq \frac{m-2}{3}p_2$.
Combining these facts, for every $x\geq x_0$, we deduce that
\begin{align*}
x A(x)-p_3&\geq x_0(p_2-(\nu-1))-\frac{m-2}{3}p_2 && \text {(by~\eqref{equation3333})}\\
&\geq (m-1) (p_2-(\nu-1))-\frac{m-2}{3}p_2  && \text {(by Claim~\ref{claim111})}\\
&=\frac{2m-1}{3}p_2-(m-1)(\nu-1)  \\
&\geq \frac{2m-1}{3}\binom{\nu}{2} -(m-1)(\nu-1)&& \text {(as $p_2\geq \binom{\nu}{2}$)}\\
&=\frac{\nu-1}{6} (\nu(2m-1)- 6(m-1))\\
&\geq \frac{\nu-1}{6} (3(2m-1)- 6m+6), && \text {(as~$\nu\geq 3$)}\\
&>0
\end{align*}
which implies~\eqref{equation2222}.
Hence, $\theta(\mathcal{H})=\lambda(\mathcal{H})^k< x_0$, as desired.  $\hfill\qed$

For $\nu\geq 2$, by Cases 2 and 3, we get that
$$\lambda(\mathcal{H})^k=\theta(\mathcal{H})
\leq \frac{m+\sqrt{m^2-4(\nu-1)}}{2},$$
with equality if and only if $\nu=2$ and $p_2=1$.
Combining this fact and Case 1, for $\nu\geq 1$, we further obtain that
$$\lambda(\mathcal{H})^k=\theta(\mathcal{H}) \leq m,$$
with equality if and only if $\nu=1$.
The proof is completed.
\end{proof}

\section{Concluding remarks}

In this paper, we proved that the largest matching root of a $k$-graph $\mathcal{H}$ with $m$ edges is at most $m^{1/k}$,
and equality holds if and only if $\mathcal{H}$ is intersecting.
For $k=2$, after deleting all isolated vertices, the resulting graph must be the star $K_{1,m}$ or a triangle, thereby confirming Csikv\'ari's conjecture.
Moreover, for $\nu(\mathcal{H})\ge 2$,  Theorem~\ref{CsikvariConjHgraph} provides a tight bound for $\lambda(\mathcal{H})$ and characterizes the parameters of extremal $k$-graphs.

Building on Theorem~\ref{CsikvariConjHgraph},
a natural problem is to determine the following parameter:
$$f(m,k,\nu):=\max\{\lambda(\mathcal{H}):
\mathcal{H} \text{ is a $k$-graph with $m$ edges and } \nu(\mathcal{H})=\nu\}.$$
Clearly, Theorem~\ref{CsikvariConjHgraph} determined $f(m,k,1)$ for $k\geq 2$ and $f(m,k,2)$ for $k\geq 3$.
For every fixed $k\geq 2$ and $\nu\geq 3$, the next result gives an asymptotic estimate of $f(m,k,\nu)$ provided that $m$ is large.




\begin{proposition}
For every fixed $k\geq 2$ and $\nu\geq 3$,
$$f(m,k,\nu)= (1-o_m(1))m^{1\over k},$$
where $o(1)_m$ is a quantity that tends to zero as $m\to \infty$.
\end{proposition}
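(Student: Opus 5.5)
The upper bound $f(m,k,\nu)\le m^{1/k}$ is immediate from Theorem~\ref{CsikvariConjHgraph}. So the plan is to exhibit, for each fixed $k\ge 2$ and $\nu\ge 3$ and every large $m$, a $k$-graph $\mathcal H$ with $m$ edges and $\nu(\mathcal H)=\nu$ whose $\theta(\mathcal H)=\lambda(\mathcal H)^k$ is at least $m-C$ for a constant $C=C(k,\nu)$. Then $\lambda(\mathcal H)\ge (m-C)^{1/k}=(1-o_m(1))m^{1/k}$.

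\textbf{Construction.} Take a star-like intersecting part together with $\nu-1$ extra disjoint edges. Let $e_1,\dots,e_{\nu-1}$ be pairwise disjoint edges. Let $\mathcal S$ consist of $m-\nu+1$ edges all containing a fixed vertex $w$, with the remaining $k-1$ vertices of each edge new and pairwise disjoint, and with everything disjoint from $\bigcup_j e_j$. Then $\mathcal H=\mathcal S\oplus e_1\oplus\cdots\oplus e_{\nu-1}$ has $m$ edges. Any matching uses at most one edge of $\mathcal S$, so $\nu(\mathcal H)=\nu$ (for $m\ge\nu$).

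\textbf{Computing the root.} By Lemma~\ref{disjointunion}, $\mu(\mathcal H,x)$ factors as $\mu(\mathcal S,x)\prod_j\mu(e_j,x)$. The star part has $\phi(\mathcal S,x)=x-(m-\nu+1)$, and each single edge has $\phi=x-1$. Hence, by the discussion before Lemma~\ref{lem:rec}, $\theta(\mathcal H)=\max\{m-\nu+1,1\}=m-\nu+1$. This gives
\[
\lambda(\mathcal H)=(m-\nu+1)^{1/k}=\Bigl(1-\tfrac{\nu-1}{m}\Bigr)^{1/k}m^{1/k},
\]
which is $(1-o_m(1))m^{1/k}$ since $\nu$ is fixed. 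When $k=2$ the construction is just $K_{1,m-\nu+1}$ plus $\nu-1$ disjoint edges, so no special case is needed.

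There is no real obstacle here. The only point to check is that the maximum modulus of the zeros of a disjoint union equals the maximum over its components. This is exactly the identity $\theta(\mathcal H)=\max_i\theta(\mathcal H_i)$ already derived from Lemmas~\ref{disjointunion} and~\ref{prelemma}.
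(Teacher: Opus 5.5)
Your proposal is correct and matches the paper's proof. The upper bound comes from Theorem~\ref{CsikvariConjHgraph}. The lower bound uses the same $k$-graph, a linear star with $m-\nu+1$ edges plus $\nu-1$ disjoint edges, whose largest matching root is $(m-\nu+1)^{1/k}$ by Lemma~\ref{disjointunion}. The only difference is cosmetic: you compute $\phi(\mathcal S,x)=x-(m-\nu+1)$ directly, whereas the paper cites the theorem's equality case for intersecting $k$-graphs, which is the same Case~1 computation.
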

\begin{proof}
Denote by $\mathcal{S}_m$ the linear star with $m$ edges, that is, the $k$-graph whose vertex set admits a partition
$\{v\}\cup V_1 \cup\cdots  \cup V_m $
with $|V_1| =\cdots =|V_m| =k-1$
and the edge set is $\{\{v\}\cup V_i : i = 1,\ldots,m\}$.
As $\mathcal{S}_m$ is intersecting, we get
$\lambda(\mathcal{S}_m)=m^{1/k}$ by Theorem~\ref{CsikvariConjHgraph}.
Write $\mathcal H_{m,k,\nu}$ for the $k$-graph obtained from
$\mathcal{S}_{m-\nu+1}$ by adding $\nu-1$ new pairwise disjoint edges that are disjoint from $V(S_{m-\nu+1})$.
By Lemma~\ref{disjointunion}, we deduce that
$$f(m,k,\nu)\geq \lambda(\mathcal H_{m,k,\nu})
=\lambda(\mathcal{S}_{m-\nu+1})
=(m-\nu+1)^{1/k}=(1-o_m(1))m^{1/k}.$$
On the other hand, we have $f(m,k,\nu)\leq m ^{1/k}$ by Theorem~\ref{CsikvariConjHgraph}.
The statement follows.
\end{proof}

\end{document}